\documentclass[12pt,reqno]{amsart}
\usepackage{cite}
\usepackage{amssymb,latexsym,amsmath,amsfonts}
\usepackage{latexsym}
\usepackage{amsmath, amsthm, amscd, amsfonts, amssymb,  color}
\usepackage[mathscr]{eucal}
\voffset = -18pt \hoffset = -27pt \textwidth = 5.6in \textheight
= 8.9in
\setlength{\textwidth}{144.7mm} \setlength{\textheight}{206.1mm}

\theoremstyle{plain}
\newtheorem{theorem}{\textbf{Theorem}}[section]

\newtheorem{corollary}[theorem]{Corollary}


\newtheorem{remark}[theorem]{\textbf{Remark}}

\newcommand{\R}{\mathbb{R}}
\newcommand{\N}{\mathbb{N}}

\newtheorem{definition}[theorem]{Definition}

\newtheorem{example}[theorem]{\textbf{Example}}


\numberwithin{equation}{section}
\begin{document}

\title[A study on Kannan Type Contractive Mappings]
{ A study on Kannan Type Contractive Mappings}
\author[ H. Garai, T. Senapati, L.K. Dey]%
{ Hiranmoy Garai$^{1}$, Tanusri Senapati$^{2}$, Lakshmi Kanta Dey$^{3}$}

\address{{$^{1}$\,} Hiranmoy Garai,
                    Department of Mathematics,
                    National Institute of Technology
                    Durgapur,
                    West Bengal,
                    India.}
                    \email{hiran.garai24@gmail.com}

\address{{$^{2}$\,} Tanusri Senapati,
                    Department of Mathematics,
                    National Institute of Technology
                    Durgapur,
                    West Bengal,
                    India.}
                    \email{senapati.tanusri@gmail.com}
\address{{$^{3}$\,} Lakshmi Kanta Dey,
                    Department of Mathematics,
                    National Institute of Technology Durgapur,
                    West Bengal,
                    India.}
                    \email{lakshmikdey@yahoo.co.in}


\thanks{*Corresponding author: lakshmikdey@yahoo.co.in (L.K. Dey)}

 \subjclass[2010]{ $47$H$10$, $54$H$25$. }
 \keywords { Metric space; boundedly compact set; $T$-orbitally compact space; Kannan type mapping}

\begin{abstract}
In this article, we consider  Kannan type contractive self-map $T$ on a metric space $(X,d)$ such that \[d(Tx,Ty)<\frac{1}{2}\{d(x,Tx)+d(y,Ty)\} \mbox{ for all } x \neq y \in X,  \] and  establish some new fixed point results without taking the compactness  of $X$ and also without assuming continuity of $T$. Further,  we anticipate a result ensuring the completeness of the space $X$ via FPP of this map. Finally, we are able to give an affirmative answer to the open question posed by J. G\'{o}rnicki [\textit{Fixed point theorems for Kannan type mappings}, J. Fixed Point Theory Appl. 2017]. Apart from these, our manuscript consists of several non-trivial examples which signify the motivation of our investigations.
\end{abstract}
\maketitle

\section{\bf {Introduction and Preliminaries}}
It is almost a century where several mathematicians have improved, extended and enriched the classical Banach contraction principle\cite{banach} in different directions along with variety of applications. It is well-known that every Banach contractive mapping is a continuous function. In this sequel, it was a natural question does there exist any contractive map accompanied with fixed point which is not necessarily continuous? In 1968, R. Kannan \cite{kan} was the first mathematician who found the answer and presented the following fixed point result. 
\begin{theorem}\cite{kan}
Let $(X,d)$ be a complete metric space and $T$ be a self-mapping on $X$ satisfying  $$d(Tx,Ty)\leq k\{d(x,Tx)+d(y,Ty)\}$$ for all $x,y \in X$ and $k\in [0,\frac{1}{2})$.
Then $T$ has a unique fixed point $z\in X$, and for any $x\in X$ the sequence of iterates $(T^nx)$ converges to $z$.
\end{theorem}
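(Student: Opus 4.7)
The plan is to run the standard Picard iteration argument, with a small twist to accommodate the fact that Kannan maps need not be continuous. Fix an arbitrary $x_0 \in X$ and set $x_{n+1} = Tx_n$. First I would apply the hypothesis to the consecutive pair $(x_{n-1}, x_n)$ to get
\[ d(x_n, x_{n+1}) \leq k\{d(x_{n-1}, x_n) + d(x_n, x_{n+1})\}, \]
which rearranges to $d(x_n, x_{n+1}) \leq h \, d(x_{n-1}, x_n)$ with $h := k/(1-k) \in [0,1)$, since $k < 1/2$ forces $k < 1-k$. Iterating this estimate gives the geometric decay $d(x_n, x_{n+1}) \leq h^n d(x_0, x_1)$.

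From here the usual triangle-inequality plus geometric-series estimate shows that $(x_n)$ is Cauchy, so by completeness it converges to some $z \in X$. The delicate step is to verify $Tz = z$ without having continuity of $T$. I would combine the triangle inequality with the Kannan bound applied to the pair $(x_n, z)$:
\[ d(z, Tz) \leq d(z, x_{n+1}) + d(Tx_n, Tz) \leq d(z, x_{n+1}) + k\{d(x_n, x_{n+1}) + d(z, Tz)\}, \]
so $(1-k)\, d(z, Tz) \leq d(z, x_{n+1}) + k\, d(x_n, x_{n+1})$, and letting $n \to \infty$ forces $d(z, Tz) = 0$.

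Uniqueness is a one-line consequence of the contractive condition: if $z'$ is another fixed point then $d(z, z') = d(Tz, Tz') \leq k\{d(z, Tz) + d(z', Tz')\} = 0$. The whole argument is routine; the only point that requires genuine care is the fixed-point verification, where one cannot pass to the limit inside $T$ in $x_{n+1} = Tx_n$, so the Kannan inequality itself must be invoked, exploiting that its right-hand side only involves the displacements $d(x_n, Tx_n) = d(x_n, x_{n+1})$ which are already known to vanish.
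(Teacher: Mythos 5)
Your proposal is correct and complete: the rearrangement $d(x_n,x_{n+1})\leq \frac{k}{1-k}\,d(x_{n-1},x_n)$, the geometric-series Cauchy estimate, the continuity-free fixed-point verification via $(1-k)\,d(z,Tz)\leq d(z,x_{n+1})+k\,d(x_n,x_{n+1})$, and the one-line uniqueness argument are all valid. The paper itself gives no proof of this statement --- it is quoted from Kannan's 1968 article as background --- so there is nothing internal to compare against; your argument is the standard one, and your fixed-point verification step is in fact the same device the authors use in their own Theorem 2.1 (bounding $d(z,Tz)$ by $d(z,T^{n+1}x_0)+d(T^{n+1}x_0,Tz)$ and absorbing the $d(z,Tz)$ term from the right-hand side).
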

Another beauty of the above result is that we can characterize the completeness of the underlying space $X$ in terms of fixed point of $T$. 
 In 1975, Subrahmanyam \cite{subra} proved that a metric space $(X,d)$ will be complete if and only if every Kannan mapping has a unique fixed point in $X$. Later on, B. Fisher \cite{fish} and M. S. Khan \cite{khan} simultaneously proved two fixed point results related to contractive type mappings. They proved that a continuous mapping on a compact metric space $(X,d)$ has a unique fixed point if $T$ satisfies  $$d(Tx,Ty)<\frac{1}{2}\{d(x,Ty)+d(y,Tx)\}$$ or $$d(Tx,Ty)<(d(x,Tx)d(y,Ty))^\frac{1}{2}$$ for all $x,y \in X$ with $x\neq y$ respectively.

 Soon there after, in 1980,  Chen and Yeh \cite{chen} extended the above two results in a more general way. In their article, they proved the following result as a corollary.
 \begin{theorem}\cite{chen}\label{chen}
Let $T$ be a continuous mapping of a non-empty compact metric space $(X,d)$ satisfying 
\begin{eqnarray*}
d(Tx,Ty)&<&\max \{d(x,y),\frac{1}{2}(d(x,Tx)+d(y,Ty)),\frac{1}{2}(d(x,Ty)+d(y,Tx)),\\
&&(d(x,y))^{-1}d(x,Tx)d(y,Ty),(d(x,Tx)d(y,Ty))^{\frac{1}{2}},\\
&&a(x,y)d(x,Ty)d(y,Tx),b(x,y)((d(x,Ty)d(y,Tx))^{\frac{1}{2}}\}
\end{eqnarray*}
for all $x,y\in X$ with $x\neq y$ and $a(x,y)$, $b(x,y)$ are two non-negative real functions, then $T$ has a fixed point. If, in addition, $a(x,y)\leq  (d(x,y))^{-1} $ and $b(x,y)\leq 1$, then $T$ has a unique fixed point.
\end{theorem}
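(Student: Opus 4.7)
My plan is to exploit the compactness of $X$ and the continuity of $T$ via the classical displacement-function argument. I would introduce $f\colon X\to[0,\infty)$ by $f(x)=d(x,Tx)$; continuity of $T$ makes $f$ continuous, and compactness forces $f$ to attain its infimum at some point $z\in X$. The existence of a fixed point then reduces to proving $f(z)=0$.

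To establish $f(z)=0$, I would argue by contradiction: suppose $z\neq Tz$ and apply the hypothesis with $x=z$, $y=Tz$. Writing $\alpha=d(z,Tz)$ and $\beta=d(Tz,T^{2}z)$, a quick inspection of the seven quantities inside the $\max$ shows that the last two vanish (they each carry a factor $d(Tz,Tz)=0$), the third is bounded by $\tfrac12 d(z,T^{2}z)\leq \tfrac{\alpha+\beta}{2}$ via the triangle inequality, and the remaining ones simplify to $\alpha$, $\tfrac{\alpha+\beta}{2}$, $\beta$ and $\sqrt{\alpha\beta}$. Hence $\beta<\max\{\alpha,\tfrac{\alpha+\beta}{2},\beta,\sqrt{\alpha\beta}\}$. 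If $\beta\geq \alpha$, this max equals $\beta$ and one reaches the absurdity $\beta<\beta$; so necessarily $\beta<\alpha$, i.e.\ $f(Tz)<f(z)$, contradicting the minimality of $f$ at $z$. Thus $Tz=z$.

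For the uniqueness part I would invoke the additional bounds $a(x,y)\leq d(x,y)^{-1}$ and $b(x,y)\leq 1$, and assume two distinct fixed points $z_{1}\neq z_{2}$ exist. Setting $x=z_{1}$, $y=z_{2}$ and using $Tz_{i}=z_{i}$, every term that carries a factor $d(z_{i},Tz_{i})$ collapses to $0$. The surviving four contributions inside the $\max$ are $d(z_{1},z_{2})$, $\tfrac12(d(z_{1},z_{2})+d(z_{2},z_{1}))=d(z_{1},z_{2})$, $a(z_{1},z_{2})\,d(z_{1},z_{2})^{2}$ and $b(z_{1},z_{2})\,d(z_{1},z_{2})$; the extra hypotheses on $a$ and $b$ are exactly what is needed to bound the last two also by $d(z_{1},z_{2})$. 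Therefore $d(z_{1},z_{2})=d(Tz_{1},Tz_{2})<d(z_{1},z_{2})$, a contradiction, so the fixed point is unique.

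I expect the main obstacle to be purely bookkeeping: making sure that every one of the seven competing quantities inside the $\max$ is correctly bounded by $\max\{\alpha,(\alpha+\beta)/2,\beta,\sqrt{\alpha\beta}\}$ in the existence step, and correctly bounded by $d(z_{1},z_{2})$ in the uniqueness step, and confirming that the side conditions $a(x,y)\leq d(x,y)^{-1}$ and $b(x,y)\leq 1$ are invoked exactly where, and only where, they are actually required. No iteration or Cauchy-sequence argument is necessary; compactness plus the displacement function does all the work.
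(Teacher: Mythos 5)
The paper does not prove this statement at all: Theorem~\ref{chen} is quoted verbatim from Chen and Yeh \cite{chen} as background, so there is no in-paper proof to compare against. Your proposal is nevertheless correct and is the standard minimal-displacement argument for such results: $f(x)=d(x,Tx)$ attains its minimum at some $z$ by compactness and continuity; testing the hypothesis at $(x,y)=(z,Tz)$ with $\alpha=d(z,Tz)$, $\beta=d(Tz,T^2z)$ does reduce the seven terms to quantities all bounded by $\max\{\alpha,\tfrac{\alpha+\beta}{2},\beta,\sqrt{\alpha\beta}\}$ (the last two vanish because of the factor $d(Tz,Tz)=0$, and the third is controlled by the triangle inequality), so $\beta\geq\alpha$ forces the absurdity $\beta<\beta$ and hence $f(Tz)<f(z)$, contradicting minimality; the uniqueness computation likewise checks out, with the bounds $a(x,y)\leq (d(x,y))^{-1}$ and $b(x,y)\leq 1$ used exactly where the sixth and seventh terms would otherwise escape control. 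This is almost certainly the argument of the original source, and it is complete as you have sketched it.
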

Very recently, J. G\'{o}rnicki \cite{gorn}  proved the following result.
\begin{theorem}\cite{gorn}\label{gorn}
Let $(X,d)$ be a compact metric space and $T:X\rightarrow X$ be a continuous mapping satisfying
$$d(Tx,Ty)<\frac{1}{2}\{d(x,Tx)+d(y,Ty)\}$$
for all $x,y\in X$ with $x\neq y$. Then $T$ has a unique fixed point and for every $x\in X$, the sequence $(T^nx)$ converges to the fixed point. 
\end{theorem}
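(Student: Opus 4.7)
My plan is to exploit compactness twice: once to produce a fixed point via a minimization argument, and once more to upgrade subsequential convergence of the iterates to full convergence.

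First, I would introduce the auxiliary function $\varphi : X \to [0,\infty)$ defined by $\varphi(x) = d(x, Tx)$. Continuity of $T$ makes $\varphi$ continuous, so compactness of $X$ guarantees that $\varphi$ attains its infimum at some point $z \in X$. The key observation is that if $z \neq Tz$, then we may apply the strict contractive inequality with $x = z$, $y = Tz$ to obtain
\[
d(Tz, T^2 z) < \tfrac{1}{2}\bigl\{d(z,Tz) + d(Tz, T^2 z)\bigr\},
\]
which simplifies to $\varphi(Tz) < \varphi(z)$, contradicting minimality. Hence $Tz = z$. Uniqueness is then immediate: two distinct fixed points $z_1, z_2$ would yield $d(z_1, z_2) = d(Tz_1, Tz_2) < 0$, which is absurd.

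For the convergence statement, fix $x_0 \in X$ and set $x_n = T^n x_0$. If $x_n = z$ for some $n$, we are done, so assume $x_n \neq x_{n+1}$ for all $n$. Applying the contractive condition to the pair $(x_n, x_{n+1})$ and simplifying shows that $\varphi(x_{n+1}) < \varphi(x_n)$, so $\varphi(x_n)$ is strictly decreasing and hence converges to some $r \geq 0$. By compactness, some subsequence $x_{n_k} \to w$; continuity of $T$ then gives $x_{n_k + 1} \to Tw$ and $x_{n_k + 2} \to T^2 w$, so that $d(w, Tw) = d(Tw, T^2 w) = r$. If $w \neq Tw$, the contractive inequality applied to $(w, Tw)$ forces $d(Tw, T^2 w) < d(w, Tw)$, i.e.\ $r < r$; therefore $w = Tw$, and by uniqueness $w = z$ and $r = 0$.

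Finally, I would promote this to $x_n \to z$ using a standard sequentially compact argument: every subsequence of $(x_n)$ has a further convergent sub-subsequence, whose limit — by repeating the reasoning above — must be a fixed point of $T$, hence $z$. Since every subsequence admits a sub-subsequence converging to the same limit $z$, the full sequence $(x_n)$ converges to $z$. I expect the main subtlety to be the subsequential argument establishing that $w$ is a fixed point; the naïve iteration estimate $d(x_{n+1}, x_{n+2}) < d(x_n, x_{n+1})$ only gives a contractive (not Cauchy) sequence, so compactness, continuity of $T$, and the limiting equality $d(w,Tw) = d(Tw, T^2w)$ must all be combined to rule out $w \neq Tw$.
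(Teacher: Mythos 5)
Your argument is correct. Note first that the paper itself offers no proof of this statement: it is quoted from G\'{o}rnicki's article and immediately observed to be a special case of Theorem \ref{chen}, so the only thing to compare against is the paper's treatment of the closely related Theorem \ref{thm1}. Your proof is the classical Edelstein-style one: minimize $\varphi(x)=d(x,Tx)$ over the compact space, use the strict inequality at the pair $(z,Tz)$ to force the minimizer to be fixed, and then combine the monotone decrease of $\varphi(x_n)$ with a subsequential limit and the identity $d(w,Tw)=d(Tw,T^2w)=r$ to conclude $r=0$ and promote subsequential to full convergence by the sub-subsequence principle. All steps check out; the only cosmetic point is that the dichotomy opening the convergence argument is more naturally phrased as ``some $x_n=x_{n+1}$'' rather than ``some $x_n=z$'', though the two are equivalent here since a repeated term is a fixed point and hence equals $z$ by uniqueness. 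The paper's own technique for this family of results is genuinely different: in the proof of Theorem \ref{thm1} it never minimizes $\varphi$ and never invokes continuity of $T$. Instead it shows the orbit is bounded, extracts a convergent subsequence to force $s_n=d(x_n,x_{n+1})\to 0$, deduces from $d(x_n,x_m)<\frac{1}{2}(s_{n-1}+s_{m-1})$ that the entire orbit is Cauchy, and identifies the limit as a fixed point using only the triangle inequality together with the contractive condition. What that buys is a strictly stronger theorem: continuity of $T$ is dropped and compactness is weakened to bounded compactness (or $T$-orbital compactness). Your minimization argument cannot be adapted to that setting, since it needs $\varphi$ continuous in order to attain its infimum; but for the theorem exactly as stated, with both hypotheses in hand, your route is shorter and perfectly sound.
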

Unfortunately, the  above result does not contribute anything new in the literature, since one can easily get this result as a special case  of Theorem \ref{chen}, which was already proved long back, in $1980$.

Up till now, we observe that to ensure the existence of fixed points of Kannan type contractive mappings, both of continuity and compactness take crucial parts in the literature. In this article, our main aim is to investigate the existence of fixed point of Kannan type contractive mappings without assuming continuity  of the  mapping as well as the compactness property of the underlying spaces. In this direction, we successfully present some fixed point results which will be presented in next section. Before proceed further, we  recall some definitions which will be useful in our main results.  
\begin{definition}\cite{edw}
A metric space $(X,d)$ is said to be boundedly compact if every bounded sequence in $X$ has a convergent subsequence.
\end{definition}
It is clear from definition that every compact metric space is boundedly compact, but a boundedly compact metric space need not be compact, for example, the set of real numbers $\mathbb{R}$  with usual metric  is not compact but boundedy compact.
\begin{definition}
Let $(X,d)$ be a metric space and $T$ be a self mapping on $X$. Then orbit of $T$ at $x\in X$ is defined as $$O_x(T)=\{x,Tx,T^2x, T^3x,\dots\}.$$ 
\end{definition}
Now, we define the concept of $T$-orbitally compact set.
\begin{definition}
Let $(X,d)$ be a metric space and $T$ be a self-mapping on $X$, then $X$ is said to be $T$-orbitally compact if every sequence in $O_x(T)$  has a convergent subsequence for all $x\in X$. 
\end{definition}
 It is clear that $T$-orbitally compactness of a space depends on the mapping $T$ defined on it. 
\begin{example}
 Let $X=[0,\infty)$ be a metric space with respect to usual metric on $\R$. Define two mappings $T_1,T_2$ on $X$ by $$T_1 x= \frac{x}{n+1}, \mbox{ if } n-1\leq x <n,$$ and $$T_2x=2x $$  for all $ x\in X $ and $n\in \N$. Then clearly $X$ is $T_1$-orbitally compact but not $T_2$-orbitally compact.

\end{example}

 Moreover, it is easy to see that every compact metric space is $T$-orbitally compact but the converse is not true. Also note that boundedly compactness and  $T$-orbitally compactness are totally independent. Even $T$-orbitally compactness of $X$ does not give the guaranty to be complete. To show this, we consider the following examples.
 \begin{example}
Let $X=[0,1)$ endowed with the usual metric. Define $T:X\rightarrow X$  by $Tx=\frac{x}{2}$. Then it is easy to see that $X$ is $T$-orbitally compact but it is not complete.
\end{example}
\begin{example}
Let $(X,d)$ be a usual metric space with $X=[0,\infty)$. We define $T:X\rightarrow X$ by $Tx=2x$. Then it is trivial to check that $X$ is boundedly compact but not $T$-orbitally compact .
\end{example}

\section{\bf Main Results} 
Now, we are in a position to state our main results.
\begin{theorem}\label{thm1}
Let $(X,d)$ be a boundedly compact metric space and $T:X\rightarrow X$ be a mapping such that $$d(Tx,Ty)<\frac{1}{2}\{d(x,Tx)+d(y,Ty)\}$$ for all $x,y\in X$ with $x\neq y$. Then $T$ will be a Picard operator.
\end{theorem}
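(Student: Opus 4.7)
The plan is to fix $x_0 \in X$, study the Picard iterates $x_n = T^n x_0$, and show they converge to the unique fixed point of $T$. If $x_N = x_{N+1}$ for some $N$, we are done, so assume $x_n \neq x_{n+1}$ for every $n$; applying the hypothesis to $(x_n, x_{n+1})$ gives $d(x_{n+1}, x_{n+2}) < \tfrac{1}{2}\{d(x_n, x_{n+1}) + d(x_{n+1}, x_{n+2})\}$, so $\phi(n) := d(x_n, x_{n+1})$ is strictly decreasing to some $r \geq 0$. Applied to $(x_m, x_n)$ with $m \neq n$, the hypothesis also yields
\[
d(x_{m+1}, x_{n+1}) < \tfrac{1}{2}\{\phi(m) + \phi(n)\},
\]
so for $m, n > N$ we have $d(x_{m+1}, x_{n+1}) < \phi(N)$, whence the orbit is bounded. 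By bounded compactness I extract a subsequence $x_{n_k} \to z$ and, after a further subsequence (using that $\{x_{n_k+1}\}$ is also bounded), $x_{n_k+1} \to w$; then $d(z, w) = \lim \phi(n_k) = r$.

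The heart of the proof is to show $r = 0$. Assuming $r > 0$, applying the hypothesis to $(x_{n_k}, w)$ (distinct for $k$ large) and passing to the limit yields $d(w, Tw) \leq \tfrac{1}{2}(r + d(w, Tw))$, i.e.\ $d(w, Tw) \leq r$; applying it instead to $(x_{n_k-1}, w)$ yields $d(z, Tw) \leq \tfrac{1}{2}(r + d(w, Tw))$, and the triangle inequality $r = d(z, w) \leq d(z, Tw) + d(Tw, w)$ then forces $d(w, Tw) \geq r/3$. A symmetric argument gives $d(z, Tz) \in [r/3, r]$. The main obstacle is converting these bounds into a contradiction, because taking limits in the strict Kannan inequality only gives a weak inequality. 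My strategy is to combine the strict estimate $d(Tz, Tw) < \tfrac{1}{2}\{d(z, Tz) + d(w, Tw)\} \leq r$ with the triangle bound $r = d(z, w) \leq d(z, Tz) + d(Tz, Tw) + d(Tw, w)$ to sharpen the interval for $d(z, Tz), d(w, Tw)$, and, if that alone does not close the gap, to iterate the analysis on the orbit of $z$ itself --- whose asymptotic residual $\lim_n d(T^n z, T^{n+1}z)$ is strictly less than $r$, since that sequence is strictly decreasing from $d(z, Tz) \leq r$ --- producing accumulation points with strictly smaller Kannan residuals, and then invoking bounded compactness via a diagonal argument to extract either a genuine fixed point or the desired contradiction.

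Once $r = 0$ is in hand, the pairwise bound shows $(x_n)$ is Cauchy, and since it is bounded with the convergent subsequence $x_{n_k} \to z$, the whole sequence converges to $z$. Applying the hypothesis to $(x_n, z)$ for indices with $x_n \neq z$ gives $d(x_{n+1}, Tz) < \tfrac{1}{2}\{\phi(n) + d(z, Tz)\}$, and the triangle inequality $d(z, Tz) \leq d(z, x_{n+1}) + d(x_{n+1}, Tz)$ combined with $x_{n+1} \to z$ and $\phi(n) \to 0$ then forces $d(z, Tz) \leq \tfrac{1}{2}d(z, Tz)$, so $Tz = z$. Uniqueness is immediate --- two distinct fixed points $z_1, z_2$ would yield $d(z_1, z_2) = d(Tz_1, Tz_2) < \tfrac{1}{2}\{0 + 0\} = 0$, absurd --- and since the argument applies from any starting $x_0 \in X$, the map $T$ is a Picard operator.
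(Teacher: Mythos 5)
Your proposal follows the same overall route as the paper --- monotone residuals $\phi(n)\downarrow r$, bounded orbit, a convergent subsequence, then $r=0$, Cauchyness of the whole orbit, and finally the fixed point and its uniqueness --- and everything before and after the step ``$r=0$'' is correct. But that step, which is the heart of the theorem, is not actually proved. The bounds you derive, namely $d(z,Tz),\,d(w,Tw)\in[r/3,r]$, $d(z,Tw)\le\tfrac12(r+d(w,Tw))$ and $d(Tz,Tw)<\tfrac12\{d(z,Tz)+d(w,Tw)\}$, are all correct but mutually consistent with $r>0$: for instance $d(z,Tz)=d(w,Tw)=r$ satisfies every one of them together with all the triangle inequalities you invoke, so no recombination of these estimates can yield a contradiction. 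The fallback you sketch --- iterating on the orbit of $z$ to produce accumulation points with strictly smaller residuals and then ``diagonalizing'' --- does not close the gap either: it produces a strictly decreasing sequence of positive residuals $r>r_1>r_2>\cdots$, and such a sequence need not tend to $0$, so one extracts neither a fixed point nor a contradiction. The hedging phrases ``if that alone does not close the gap'' and ``either a genuine fixed point or the desired contradiction'' are an admission that the argument is incomplete; as written, the proposal does not prove the theorem.

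It is worth noting that you have put your finger on exactly the point the paper itself elides. The paper writes $\lim_k d(x_{n_k},x_{n_k+1})=d(\lim_k x_{n_k},\lim_k x_{n_k+1})=d(z,z)=0$, i.e., it asserts without justification that $x_{n_k+1}\to z$. Since $x_{n_k+1}=Tx_{n_k}$ and $T$ is not assumed continuous, the indices $n_k+1$ need not belong to the subsequence, and the assertion $x_{n_k+1}\to z$ is equivalent to $s_{n_k}\to 0$, which is precisely what is being proved --- so the paper's argument is circular at this step. Your more careful bookkeeping (extracting $w=\lim x_{n_k+1}$ with $d(z,w)=r$) exposes the difficulty rather than resolving it. To complete the proof one needs a genuinely new idea forcing $\lim_n d(T^nx_0,T^{n+1}x_0)=0$, or producing a fixed point directly from the limit set of the orbit; neither your proposal nor the paper's own proof supplies one.
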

\begin{proof}
Let $x_0\in X$ be arbitrary but fixed  and consider the iterated sequence $(x_n)$ where $x_n=T^nx_0$ for each $n\in \mathbb{N}$. If the sequence $(x_n)$ has two equal consecutive terms, then $T$ must have fixed point. So, we consider that no two consecutive terms of $(x_n)$ are equal. We denote $s_n=d(x_n,x_{n+1})$ for each $n\in \mathbb{N}$. Then we have
\begin{eqnarray*}
s_n & = & d(T^nx_0,T^{n+1}x_0)\\
& = & d(T(T^{n-1}x_0),T(T^nx_0)\\
& < & \frac{1}{2}\{d(T^{n-1}x_0,T^nx_0)+d(T^nx_0,T^{n+1}x_0)\}\\
& =& \frac{1}{2}(s_{n-1}+s_n)\\
 \Rightarrow s_n & < & s_{n-1}.
\end{eqnarray*}
This shows that $(s_n)$ is a strictly decreasing sequence of real numbers and also the sequence is bounded below, so it must be a convergent sequence. For each $n\in \mathbb{N}$, we must have, \[s_n<s_{n-1} < \dots < s_1=K(say).\]
 
Again, for all $n,m \in \mathbb{N}$, we deduce, $$d(x_n,x_m)<\frac{1}{2}(s_{n-1}+s_{m-1})<K.$$
Therefore, $(x_n)$ is a bounded sequence in $X$.  By boundedly compactness property of $X$, $(x_n)$ must have a convergent subsequence, say $(x_{n_k})$ which converges to some $z \in X$.

Therefore, $$\displaystyle \lim_{k\to \infty} d(x_{n_k},x_{n_k+1})=d(\displaystyle \lim_{k\to \infty}x_{n_k},\displaystyle \lim_{k\to \infty} x_{n_k+1})=d(z,z)=0.$$
This shows that the convergent sequence $(s_n)$ contains a subsequence $(s_{n_k})$ which itself converges to 0. So the sequence $(s_n)$  must converge to 0. Hence, for all $n,m \in \mathbb{N}$, $$d(x_n,x_m)<\frac{1}{2}\{d(T^{n-1}x_0),T^nx_0)+d(T^{m-1}x_0,T^mx_0)\}\to 0$$ as $n,m\to \infty$.

This deduces that $(x_n)$ is a Cauchy sequence. As the subsequence $(x_{n_k})$ of $(x_n)$ converges to $z$, so the limit of $(x_n)$ must be $z$. Also, we have
\begin{eqnarray*}
d(z,Tz)& \leq & d(z,T^{n+1}x_0)+d(T^{n+1}x_0, Tz)\\
& < & d(z,T^{n+1}x_0) +  \frac{1}{2}\{d(T^nx_0,T^{n+1}x_0)+d(z,Tz)\}\\
\Rightarrow \frac{1}{2}d(z,Tz) & < &d(z,T^{n+1}x_0)+\frac{1}{2}d(T^nx_0,T^{n+1}x_0)\to 0 \mbox { as n}\to\infty.
\end{eqnarray*}
This implies that  $z=Tz$, i.e., $z$ is a fixed point of $T$.

 Next, we check the uniqueness of $z$. Arguing by contradiction, let $z^*$ be another fixed point of $T$, then
\begin{eqnarray*}
d(z,z^*)&=&d(Tz,Tz^*)\\
& < & \frac{1}{2}\{d(z,Tz)+d(z^*,Tz^*)\}\\
\Rightarrow d(z,z^*)&<&0,
\end{eqnarray*} which leads us to a contradiction. Hence, our assumption was wrong. Therefore, $z$ must be the unique fixed point $T$. Since, we take $x_0$ as an arbitrary point, so for every $x\in X$, the iterated sequence $(T^nx)$ converges to $z$,  i.e., $T$ is a Picard operator.
\end{proof}
One can deduce the following result as an immediate consequence of the above result. 
\begin{corollary}
Let $(X,d)$ be a boundedly compact metric space and $T:X\rightarrow X$ be a mapping such that $$d(T^{m+1}x,T^{m+1}y)<\frac{1}{2}\{d(T^mx,T^{m+1}x)+d(T^my,T^{m+1}y)\}$$ for all $x,y \in X$ with $x\neq y$ for some positive integer $m\in N$. Then $T$ has  unique fixed point $z$ and for any $x \in X$ the sequence of iterates $(T^nx)$ converges to $z$.
\end{corollary}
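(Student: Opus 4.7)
The plan is to mirror the proof of Theorem~\ref{thm1}, with the contractive condition applied to shifted iterate-pairs rather than consecutive ones. Fix $x_0 \in X$ arbitrarily, and set $x_n = T^n x_0$ and $s_n = d(x_n, x_{n+1})$. If two consecutive iterates coincide we obtain a fixed point directly, so assume otherwise. For $n \geq m+1$, applying the hypothesis with $x = x_{n-m-1}$ and $y = x_{n-m}$ (which are distinct) gives
\[
d(x_n, x_{n+1}) < \tfrac{1}{2}\bigl(d(x_{n-1}, x_n) + d(x_n, x_{n+1})\bigr),
\]
whence $s_n < s_{n-1}$. Thus $(s_n)_{n \geq m}$ is strictly decreasing and bounded below, so convergent.

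To bound $(x_n)$, apply the hypothesis with $x = x_{n-m-1}$ and $y = x_{k-m-1}$ (for $n, k > m+1$; the degenerate case $x_{n-m-1} = x_{k-m-1}$ forces $x_n = x_k$ trivially), yielding $d(x_n, x_k) < \tfrac{1}{2}(s_{n-1} + s_{k-1}) \leq s_m$. Hence the tail of $(x_n)$ has diameter at most $s_m$, so the whole sequence is bounded. By bounded compactness, extract a convergent subsequence $x_{n_k} \to z$. Mimicking the argument of Theorem~\ref{thm1}, I would deduce $s_n \to 0$, and then the displayed bound shows $(x_n)$ is Cauchy and converges to $z$.

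To establish $Tz = z$, apply the hypothesis with $x = x_{n-m-1}$ and $y = z$ (distinct for $n$ large) to get
\[
d(x_n, T^{m+1} z) < \tfrac{1}{2}\bigl(s_{n-1} + d(T^m z, T^{m+1} z)\bigr),
\]
and let $n \to \infty$ to obtain $d(z, T^{m+1} z) \leq \tfrac{1}{2} d(T^m z, T^{m+1} z)$. In the easy case $T^m z = T^{m+1} z$, the right-hand side vanishes, giving $z = T^{m+1} z = T^m z$ and consequently $Tz = T^{m+1} z = z$. For the remaining case $T^m z \neq T^{m+1} z$, apply the hypothesis to the pairs $(T^{j-1} z, T^j z)$ to see that $b_j := d(T^{m+j} z, T^{m+j+1} z)$ is strictly decreasing in $j$, and combine this with a bounded-compactness argument along the bounded orbit $(T^{m+j} z)_{j \geq 0}$ of $z$ to force $b_0 = 0$, contradicting the case assumption. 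Uniqueness follows, as in Theorem~\ref{thm1}, by applying the hypothesis to two putative fixed points.

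The main obstacle will be the last case of the fixed-point verification: the limiting procedure only produces a bound on $d(z, T^{m+1} z)$ in terms of $d(T^m z, T^{m+1} z)$, not on $d(z, Tz)$ directly. Translating this into the identity $Tz = z$ requires a careful orbit-plus-compactness argument, exploiting the strict contraction property of $m$-shifted consecutive distances to rule out a nontrivial limit cycle about $z$.
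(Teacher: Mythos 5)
Your reduction of every step except the last is fine, and you have correctly located the real difficulty: the limiting argument only bounds $d(z,T^{m+1}z)$ by $\tfrac12 d(T^mz,T^{m+1}z)$, because the hypothesis never produces the quantity $d(z,Tz)$ unless $z$ itself lies in the image of $T^m$. But your proposed repair of the remaining case is a non sequitur. Showing that $b_j=d(T^{m+j}z,T^{m+j+1}z)$ is strictly decreasing and that the orbit of $z$ converges (it does, back to $z$) only yields $b_j\downarrow 0$; a strictly decreasing positive sequence tending to $0$ is not a contradiction, so nothing forces $b_0=0$ and Case 2 does not close.

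In fact it cannot be closed, because the corollary as literally stated is false. Take $X=\{0\}\cup\{3^{-n}:n\ge 1\}$ with the usual metric (compact, hence boundedly compact), $T(0)=3^{-1}$, $T(3^{-n})=3^{-n-1}$, and $m=1$. Every $x\in X$ has $Tx=3^{-p}$ and $T^2x=3^{-p-1}$ with $p\ge 1$ depending injectively on $x$, so for $x\neq y$ with exponents $p<q$ one gets $d(T^2x,T^2y)=3^{-p-1}-3^{-q-1}<3^{-p-1}+3^{-q-1}=\tfrac12\{d(Tx,T^2x)+d(Ty,T^2y)\}$; yet $T$ is fixed point free. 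Here every orbit converges to $z=0$, the $b_j$ are strictly decreasing to $0$, and $Tz\neq z$. The underlying obstruction is that the hypothesis constrains $T$ only on $T^m(X)$, while the orbit limit $z$ need only lie in the closure of $T^m(X)$; your scheme succeeds precisely when $z=T^mx^*$ for some $x^*$, since the pair $(x^*,T^{n-m-1}x_0)$ then gives $d(Tz,z)\le\tfrac12 d(z,Tz)$. The paper supplies no proof, asserting the corollary is an immediate consequence of Theorem~\ref{thm1}, so the gap you identified is not one that can be filled; the statement would need a stronger hypothesis (for instance the Kannan condition for $S=T^{m+1}$ in the form $d(Sx,Sy)<\tfrac12\{d(x,Sx)+d(y,Sy)\}$, or closedness of $T^m(X)$).
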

 We have already shown that boundedly compactness and $T$-orbitally compactness are independent. In our next result, we show the existence of fixed point of $T$ by taking $T$-orbitally compactness instead of boundedly compactness. 

\begin{theorem}
Let $(X,d)$ be a $T$-orbitally compact metric space where $T:X\rightarrow X$ is a mapping such that $$d(Tx,Ty)<\frac{1}{2}\{d(x,Tx)+d(y,Ty)\}$$ for all $x,y\in X$ with $x\neq y$, then $T$ has a unique fixed point $z$ and for any $x \in X$ the sequence of iterates $(T^nx)$ converges to $z$.
\end{theorem}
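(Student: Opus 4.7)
The plan is to follow the proof of Theorem~\ref{thm1} almost verbatim, with $T$-orbital compactness replacing bounded compactness at the one place where a convergent subsequence of the iterates is extracted. This is natural because in the earlier proof the first use of bounded compactness was applied to the Picard iterates $(x_n)$ themselves, and these iterates lie in the orbit $O_{x_0}(T)$, so $T$-orbital compactness is tailor-made to supply a convergent subsequence here.

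First I would fix $x_0 \in X$ and set $x_n = T^n x_0$. If some $x_n = x_{n+1}$, then $x_n$ is a fixed point and we are finished; otherwise let $s_n = d(x_n, x_{n+1}) > 0$. Applying the hypothesis to the pair $(x_{n-1}, x_n)$ gives $s_n < \frac{1}{2}(s_{n-1} + s_n)$, so $s_n < s_{n-1}$, and $(s_n)$ is a strictly decreasing sequence of non-negative reals, hence convergent.

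Since $(x_n) \subseteq O_{x_0}(T)$, the $T$-orbital compactness of $X$ yields a subsequence $(x_{n_k})$ converging to some $z \in X$. Continuity of $d$ then forces $s_{n_k} = d(x_{n_k}, x_{n_k+1}) \to d(z,z) = 0$, and since $(s_n)$ is already known to converge, its limit must be $0$. The estimate $d(x_n, x_m) = d(Tx_{n-1}, Tx_{m-1}) < \frac{1}{2}(s_{n-1} + s_{m-1}) \to 0$ then shows $(x_n)$ is Cauchy, and a Cauchy sequence whose subsequence converges to $z$ converges to $z$ in its entirety, giving $x_n \to z$. Note this last step does not require completeness of $X$.

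To conclude $Tz = z$, I would reuse the calculation in Theorem~\ref{thm1}: from the triangle inequality $d(z, Tz) \leq d(z, x_{n+1}) + d(x_{n+1}, Tz)$ and the contractive hypothesis applied to $(x_n, z)$, one gets $\frac{1}{2} d(z, Tz) \leq d(z, x_{n+1}) + \frac{1}{2} s_n \to 0$, so $Tz = z$. Uniqueness is an immediate one-liner: if $z^*$ were a second fixed point then $d(z,z^*) = d(Tz, Tz^*) < \frac{1}{2}(d(z,Tz) + d(z^*,Tz^*)) = 0$, a contradiction. I do not foresee any genuine obstacle here: the whole point is that orbital compactness is precisely the hypothesis one needs to extract a limit point of $(x_n)$, and all other steps of Theorem~\ref{thm1} transfer unchanged since they used only the contractive inequality and metric properties, never bounded compactness or completeness of $X$.
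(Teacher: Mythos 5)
Your proposal is correct and follows essentially the same route as the paper: extract a convergent subsequence of the Picard iterates via $T$-orbital compactness, deduce $s_n\to 0$ and hence that $(x_n)$ is Cauchy with limit $z$, and finish with the same triangle-inequality argument for $Tz=z$ and the one-line uniqueness. If anything, you are slightly more careful than the paper, which omits the explicit verification that $(s_n)$ is decreasing (hence convergent) in this proof and leaves uniqueness to the reader.
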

\begin{proof}
Let $x_0\in X$ be arbitrary but fixed and consider the sequence $(x_n)$ where $x_n=T^nx_0$ for each $n \in \mathbb{N}$. Since $X$ is $T$-orbitally compact, so the sequence $(x_n)$ has a convergent subsequence, say $(x_{n_k})$ and let $(x_{n_k})$ converge to $z$ in $X$.

Now, $$\displaystyle \lim_{k\to \infty} d(x_{n_k},x_{n_k+1})=d(\displaystyle \lim_{k\to \infty}x_{n_k},\displaystyle \lim_{k\to \infty} x_{n_k+1})=d(z,z)=0.$$
Thus we see that the convergent sequence $(d(x_n,x_{n+1}))$ contains the subsequence  $(d(x_{n_k},x_{n_k+1}))$ which converges to $0$, so the sequence $(d(x_n,x_{n+1}))$ itself converges to $0$.

For all $n,m \in \mathbb{N}$, we have, $$d(x_n,x_m)<\frac{1}{2}(s_{n-1}+s_{m-1}) \to 0$$ as $n,m \to\infty$. This implies that the sequence $(x_n)$ is a Cauchy sequence and  $x_n \to z\in X$ as $n\to \infty$. Next, we claim that $z$ is a fixed point of $T$.
\begin{eqnarray*}
d(z,Tz)& \leq & d(z,T^{n+1}x_0)+d(T^{n+1}x_0, Tz)\\
& < & d(z,T^{n+1}x_0) +  \frac{1}{2}\{d(T^nx_0,T^{n+1}x_0+d(z,Tz)\}\\
\Rightarrow \frac{1}{2}d(z,Tz) & < &d(z,T^{n+1}x_0)+\frac{1}{2}d(T^nx_0,T^{n+1}x_0)\to 0 \mbox { as n}\to\infty.
\end{eqnarray*}
Therefore $z=Tz$ and thus $z$ is a fixed point $T$. Also it is easy to check that $z$ is the only fixed point of $X$.
\end{proof}
In support of our result, we present the following example. It is  surprise to note that even a discontinuous  Kannan type contractive self-map $T$ on an incomplete  metric space $X$  has a unique fixed point.
\begin{example}
Let $X=(1,2] \cup \{-1,0\}$ and define $T:X \rightarrow X$ by
 \[ T(x) = \left\{ \begin{array}{ll}
-1 & {x=2};\\
 0 & {x\neq 2 }.\end{array} \right. \]
Now, for $x\neq 2$, we have
$$d(Tx,T2)=|Tx-T2|=|0+1|=1$$ whereas $$\frac{1}{2}\{d(x,Tx)+d(2,T2)\}=\frac{1}{2}\{|x-Tx|+|2+1|\}>1.$$
So, $$d(Tx,T2)<\frac{1}{2}\{d(x,Tx)+d(2,T2)\}.$$
Again, for $x,y \in X$ with $x,y \neq 2$ and $x \neq y$, we have $d(Tx,Ty)=0$ but 
$$\frac{1}{2}\{d(x,Tx)+d(y,Ty)\}=\frac{1}{2}\{|x-Tx|+|y-Ty|\}>0.$$
Therefore, we have $d(Tx,Ty)<\frac{1}{2}\{d(x,Tx)+d(y,Ty)\}.$

Clearly, $X$ is non-compete $T$-orbitally compact and $T$ is a discontinuous mapping satisfying  $d(Tx,Ty)<\frac{1}{2}\{d(x,Tx)+d(y,Ty)\}$ for all $x,y\in X$ with $x\neq y$. But still $T$ has a fixed point and $0$ is the only fixed point $T$. 
\end{example}
Next, we  present an important result which provides a sufficient condition of completeness of the underlying space via fixed point property (FPP).
\begin{theorem}
Let every self-mapping $T$ on a metric space $(X,d)$ satisfying $$d(Tx,Ty)<\frac{1}{2}\{d(x,Tx)+d(y,Ty)\}$$ for all $x,y \in X$ with $x\neq y,$ has a unique fixed point. Then $(X,d)$ must be a complete metric space. 
\end{theorem}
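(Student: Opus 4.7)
The plan is to argue the contrapositive: assuming $(X,d)$ is not complete, I construct a fixed-point-free self-map $T$ of $X$ satisfying the strict Kannan-type inequality, which contradicts the hypothesis. Choose a Cauchy sequence $(a_n)\subset X$ with no limit in $X$, and by passing to a subsequence assume the $a_n$ are pairwise distinct. Embed $X$ isometrically into its completion $\hat{X}$; then $a_n$ converges to some $\hat{a}\in\hat{X}\setminus X$. For $x\in X$, set $\rho(x):=\lim_{n\to\infty}d(x,a_n)=d(x,\hat{a})$, which is strictly positive because $\hat{a}\notin X$.

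Construct $T$ as follows. For each $x\in X$, put $\epsilon_x:=\rho(x)/7$ and choose a large index $N(x)$ so that simultaneously
\[
d(x,a_{N(x)})<\rho(x)+\epsilon_x,\qquad d(a_{N(x)},\hat{a})<\epsilon_x,\qquad a_{N(x)}\neq x.
\]
The third condition can be arranged because the $a_n$ are distinct and at most one of them coincides with $x$. Define $Tx:=a_{N(x)}\in X$. By the third condition, $Tx\neq x$ for every $x$, so $T$ has no fixed point.

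To verify the strict Kannan inequality, fix distinct $x,y\in X$. By the triangle inequality in $\hat{X}$,
\[
d(Tx,Ty)\leq d(a_{N(x)},\hat{a})+d(\hat{a},a_{N(y)})<\epsilon_x+\epsilon_y,
\]
while the reverse triangle inequality yields $d(x,Tx)>\rho(x)-\epsilon_x$ and $d(y,Ty)>\rho(y)-\epsilon_y$. Because $3(\epsilon_x+\epsilon_y)=\tfrac{3}{7}(\rho(x)+\rho(y))<\rho(x)+\rho(y)$, rearranging gives $\epsilon_x+\epsilon_y<\tfrac{1}{2}\bigl[(\rho(x)-\epsilon_x)+(\rho(y)-\epsilon_y)\bigr]$, and hence $d(Tx,Ty)<\tfrac{1}{2}\bigl[d(x,Tx)+d(y,Ty)\bigr]$. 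This contradicts the hypothesis and forces $(X,d)$ to be complete.

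The main obstacle is selecting $N(x)$ so that all three defining conditions hold uniformly in $x$, especially when $x$ lies close to $\hat{a}$ (for example $x=a_n$ with $n$ large) and hence $\rho(x)$ and the working tolerance $\epsilon_x$ become very small. This is resolved by the freedom to take $N(x)$ arbitrarily large: Cauchy-ness of $(a_n)$ drives $d(a_n,\hat{a})\to 0$, and the convergence $d(x,a_n)\to\rho(x)$ handles the first condition, so both approximations can be made sharper than the proportional tolerance $\epsilon_x=\rho(x)/7$ regardless of how small $\rho(x)$ is.
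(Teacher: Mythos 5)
Your proof is correct, and it rests on the same core idea as the paper's: argue by contradiction from incompleteness and, given a non-convergent Cauchy sequence, build a fixed-point-free self-map satisfying the strict Kannan inequality by sending each point to a sufficiently deep term of that sequence. The execution differs in a worthwhile way, though. The paper works entirely inside $X$, measuring the required depth via $d(x,A)$ where $A$ is the range of the sequence, and must therefore split the verification into the cases $x\notin A$, $x\in A$, and mixed pairs; the case $x\in A$ is the delicate one, since there one implicitly needs the positive distance from $x_{n_0}$ to the rest of $A$ (the paper's inequality as written, quantified over all $n$, would force $d(x_m,x_{n_0'})<\tfrac{1}{2}d(x_{n_0},x_{n_0})=0$). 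You instead pass to the completion and use the single uniform yardstick $\rho(x)=d(x,\hat{a})>0$, which collapses all cases into one computation with the explicit margin $\epsilon_x=\rho(x)/7$; the chain $d(Tx,Ty)<\epsilon_x+\epsilon_y<\tfrac{1}{2}\bigl[(\rho(x)-\epsilon_x)+(\rho(y)-\epsilon_y)\bigr]<\tfrac{1}{2}\bigl[d(x,Tx)+d(y,Ty)\bigr]$ checks out, and the condition $a_{N(x)}\neq x$ guarantees $T$ is fixed-point free. Your version is cleaner and sidesteps the paper's case analysis, at the modest cost of invoking the completion $\hat{X}$.
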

\begin{proof}
Arguing by contradiction, assume that $(X,d)$ is not complete, then there must be a Cauchy sequence $(x_n)$ in $X$ which is not convergent in $X$. Without loss of generality, we assume that all terms of the sequence $(x_n)$ are distinct. Now, we consider the following set
 $$A=\{x_n:n\in \mathbb{N}\}.$$
 As the sequence $(x_n)$ does not converge in $X$, so $d(x,A)>0$ for all $x\in X \backslash A$.
 
Let $x\in X$ be any arbitrary point. If $x\in X\backslash A$, then we can find an integer $n_x\in \mathbb{N}$ such that 
\begin{eqnarray}
d(x_m,x_{n_x})&<&  \frac{1}{2}d(x,A), ~\forall m\geq n_x\nonumber\\
&\leq &\frac{1}{2} d(x,x_n),~\forall n \in \mathbb{N}\nonumber\\
\Rightarrow d(x_m,x_{n_x})&<& \frac{1}{2}d(x,x_n),~ \forall m\geq n_x ~\mbox{and}~ n\in \mathbb{N}.\label{equ1}
\end{eqnarray}

Let $x\in A$, then $x=x_{n_0}$, for some $n_0\in \mathbb{N}$. Again, we can find some $n_0'\in \mathbb{N}$ such that 
\begin{equation}\label{equ2}
(x_m,x_{n_0'})<\frac{1}{2}d(x_n,x_{n_0})
\end{equation} for all $n\in \mathbb{N}$ and $m\geq n_0'>n_0.$

Next, we define $T:X\rightarrow X$ by
$$Tx=
\begin{cases}
x_{n_x}, \text{if x }\in X \backslash A;\\
x_{n_0'}, \text{if x}\in A \mbox{ and } x=x_{n_0}.
\end{cases}$$

Let $x,y \in X$ be arbitrary points with $x\neq y$. If $x,y \in X\backslash A$, then $Tx=x_{n_x}$ and $Ty=y_{n_y}$. Without loss of generality, we assume that $n_y\geq n_x$. Then from Equation \ref{equ1}, we have  $$d(y_{n_y},x_{n_x})<\frac{1}{2}(x,x_{n_x})=\frac{1}{2}d(x,Tx)$$ which yields $$d(Tx,Ty)<\frac{1}{2}\{d(x,Tx)+d(y,Ty)\}.$$
Next, we consider that $x,y \in A$, and $x=x_{n_0}$; $y=y_{m_0}$ for some $n_0,m_0\in \mathbb{N}$. Then $Tx=x_{n_0'}$ and $Ty=y_{m_0'}$. Without loss of generality,  we assume that $m_0'\geq n_0'$. Then from Equation \ref{equ2}, we can deduce that
 $$T(x_{m_0'},x_{n_0'})< \frac{1}{2}d(x_{n_0'},x_{n_0})=\frac{1}{2}d(x,Tx)$$ which gives $$d(Tx,Ty)<\frac{1}{2}\{d(x,Tx)+d(y,Ty)\}.$$
 Finally, we consider another possibility such that $x\in X\backslash A$ and $y\in A$.  We set $y=x_{n_0}$ for some $n_0\in \mathbb{N}$.  Then, we obtain $Tx=x_{n_x}$ and $Ty=x_{n_0'}$. If $n_0'\geq n_x$, then from Equation \ref{equ1}, we get, $$d(x_{n_0'}, x_{n_x})< \frac{1}{2}d(x,x_{n_x})=\frac{1}{2}d(x,Tx)$$ which implies $$d(Tx,Ty)<\frac{1}{2}\{d(x,Tx)+d(y,Ty)\}.$$
 
For $n_x>x_{n_0'}$, Equation \ref{equ2} yields $$d(x_{n_x},x_{n_0'})<\frac{1}{2}d(x_{n_0'},x_{n_0})=\frac{1}{2}d(y,Ty)$$ which deduces $$d(Tx,Ty)<\frac{1}{2}\{d(x,Tx)+d(y,Ty)\}.$$ 
Therefore, for all $x,y \in X$ with $x\neq y$, we have
 $$d(Tx,Ty)<\frac{1}{2}\{d(x,Tx)+d(y,Ty)\},$$ i.e., $T$ is a Kannan type contractive map which has no fixed point. This leads us to a contradiction. Hence, our assumption was wrong. Therefore, $(X,d)$ must be a complete metric space.
\end{proof}

Recently, J. G\'{o}rinicki \cite{gorn} raised the following open question:

\noindent{\textbf{Question:}} Does there exists a complete but noncompact metric space $(X,d)$ and a continuous mapping $T:X\rightarrow X$ such that $$d(Tx,Ty)<\frac{1}{2}\{d(x,Tx)+d(y,Ty)\}$$ for all $x,y\in X$ with $x\neq y$ and $T$ is fixed point free?

We  give an affirmative answer of the above question which is presented below.
\begin{example}
Let us choose  $X=\mathbb{N}$ and we define $d:X\times X\rightarrow \mathbb{R}$ by
\[    d(x,y) = \left\{\begin{array}{ll}
        1+|\frac{1}{x}-\frac{1}{y}|, & \text{if } x\neq y\\
       0, & \text{if } x=y.
        \end{array}\right.  \]
Clearly $d$ is a metric on $X$. Note that every Cauchy sequence in $X$  is eventually constant and hence  $(X,d)$ is a complete metric space. However, this is non-compact as the sequence $(n)$ has no convergent subsequence in $X$. Now, we define a function $T:X\rightarrow X$ by $$Tx=3x$$ for all $x\in X$. It is easy to verify that $T$ is continuous  and a fixed point free map. Now, it is remaining to show that $T$ satisfies the Kannan type contractive condition. In order to do this, we choose $x,y\in X$ with $x<y$, then,  
\begin{eqnarray*}
d(Tx,Ty)&=& 1+|\frac{1}{3x}-\frac{1}{3y}|\\
&=& 1+\frac{1}{3x}-\frac{1}{3y}< 1+\frac{1}{3x};
\end{eqnarray*}
whereas, 
\begin{eqnarray*}
\frac{1}{2}\{d(x,Tx)+d(y,Ty)\}&=&\frac{1}{2}\big\{1+|\frac{1}{x}-\frac{1}{3x}|+1+|\frac{1}{y}-\frac{1}{3y}|\big\}\\
&=&1+\frac{1}{3x}+\frac{1}{3y}>1+\frac{1}{3x}.
\end{eqnarray*}
Therefore, $$d(Tx,Ty)<\frac{1}{2}\{d(x,Tx)+d(y,Ty)\}$$ for all $x,y$ in $X$ with $x<y$.

Similarly, one can prove it for the case $x,y\in X$ with $x>y$.
Now from the above arguments, we can say that  
$$d(Tx,Ty)<\frac{1}{2}\{d(x,Tx)+d(y,Ty)\}$$ for all $x,y$ in $X$ with $x\neq y$. Thus, $T$ is a Kannan type contractive map but it is fixed point free.
\end{example}
\begin{remark}
There are special type of complete but non-compact spaces, where, we can always  find fixed point of this type map. For example, if we take $X$ as  a closed subset of $\mathbb{R}^n$, ($n\geq 1$), equipped with the usual metric, then every Kannan type contactive self-mapping on $X$ must have a unique fixed point. This is clear from   Theorem \ref{thm1}.
\end{remark}

However, the following result gives the guaranty of existence of unique fixed point of Kannan type contactive self-map $T$ on an arbitrary complete metric space $X$ with a mild additional condition  on $T$.
\begin{theorem}
Let $(X,d)$ be a complete metric space and $T$ be a self-mapping on $X$ such that
\begin{enumerate}
\item[i)]$d(Tx,Ty)<\frac{1}{2}\{d(x,Tx)+d(y,Ty)\}$ for all $x,y$ in $X$ with $x\neq y$,
\item[ii)] For any $x\in X$ and for any $\epsilon>0$, there exists $\delta>0$ such that $d(T^ix,T^jx)<\epsilon + \delta $ implies $d(T^{i+1}x,T^{j+1}x)\leq \epsilon$ for any $i,j \in \mathbb{N} \cup \{0\}$.
\end{enumerate}
Then $T$ has a unique fixed point $z$ and for any $x \in X$ the sequence of iterates $T^nx$ converges to $z$.
\end{theorem}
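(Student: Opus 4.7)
The plan is to follow the scheme of Theorem \ref{thm1}: fix an arbitrary $x_0 \in X$, form the Picard sequence $x_n = T^n x_0$, and show that it is Cauchy, hence convergent by completeness, to a limit $z$ which will turn out to be the unique fixed point. The key obstacle, and the precise reason we need hypothesis (ii), is that without bounded or orbital compactness we cannot argue via a convergent subsequence that $s_n := d(x_n, x_{n+1}) \to 0$; condition (ii) is tailored to fill exactly that gap.

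If some $x_n = x_{n+1}$ we are done, so assume no two consecutive iterates coincide. Applying (i) to the pair $(x_{n-1}, x_n)$ yields $s_n < \tfrac{1}{2}(s_{n-1} + s_n)$, hence $s_n < s_{n-1}$. Thus $(s_n)$ is strictly decreasing and bounded below, so it converges to some $s \geq 0$; because the decrease is strict we have $s_n > s$ for every $n$.

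Next I would use (ii) to show $s = 0$. Suppose for contradiction $s > 0$ and let $\delta > 0$ correspond to $\epsilon = s$ in hypothesis (ii) applied with $x = x_0$. Since $s_{n-1} \to s$, we have $s_{n-1} < s + \delta$ for all sufficiently large $n$, and then (ii) with $i = n-1$, $j = n$ forces $s_n \leq s$, contradicting $s_n > s$. Hence $s = 0$.

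With $s_n \to 0$ in hand, condition (i) gives $d(x_m, x_n) = d(Tx_{m-1}, Tx_{n-1}) < \tfrac{1}{2}(s_{m-1} + s_{n-1}) \to 0$ as $m,n \to \infty$ (and trivially $0$ if $x_{m-1} = x_{n-1}$), so $(x_n)$ is Cauchy, and completeness of $X$ supplies a limit $z \in X$. The fixed-point equation is extracted from
\[
 d(z, Tz) \leq d(z, x_{n+1}) + d(Tx_n, Tz) < d(z, x_{n+1}) + \tfrac{1}{2}\bigl(s_n + d(z, Tz)\bigr),
\]
which rearranges to $\tfrac{1}{2} d(z, Tz) < d(z, x_{n+1}) + \tfrac{1}{2} s_n \to 0$, giving $Tz = z$. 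Uniqueness is immediate from (i): a second fixed point $z^*$ would give $d(z, z^*) = d(Tz, Tz^*) < 0$. The only delicate step is the one isolated above; everything else is a direct transcription of the proof of Theorem \ref{thm1}.
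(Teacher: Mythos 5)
Your proposal is correct and follows essentially the same route as the paper: strict monotonicity of $s_n=d(x_n,x_{n+1})$ from (i), then hypothesis (ii) applied with $\epsilon$ equal to the limit $s$ to force $s=0$ (the paper phrases the same contradiction with $b$ in place of $s$), followed by the identical Cauchy, limit, fixed-point, and uniqueness steps. Your version is in fact slightly more carefully worded at the contradiction step, since you make explicit that strict decrease gives $s_n>s$ for all $n$.
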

\begin{proof}
Let $x_0\in X$ be arbitrary but fixed and consider the sequence $(x_n)$ where $x_n=T^nx_0$ for each $n\in \mathbb{N}$. Let the sequence $(x_n)$ do not have two equal consecutive terms, i.e., $x_n\neq x_{n+1}$ for all $n\in \mathbb{N}$. Then it is not difficult to check that the sequence of real numbers $(s_n)$ where $s_n=d(x_n,x_{n+1})$ is a decreasing sequence and also this sequence is bounded below, so this sequence is convergent and let $\displaystyle \lim_{n\to \infty} s_n=b$, so $b\geq 0$.

Now if $b>0$, then by definition of $b$, there exists $\delta>0$ and $n\in \mathbb{N}$ such that 
\begin{eqnarray*}
&&s_n < b+ \delta\\
&\Rightarrow & d(x_n,x_{n+1})< b+ \delta.
\end{eqnarray*}
So by given condition, we have,  $d(x_{n+1},x_{n+2})\leq b$, i.e., $s_{n+1}\leq b$. This leads to a contradiction, so we must have b=0. Therefore, $ \displaystyle \lim_{n\to \infty} d(x_n,x_{n+1})=0$.

Now for any $n,m \in \mathbb{N}$, we have,$$d(x_n,x_m)<\frac{1}{2}\{d(T^{n-1}x_0,T^nx_0)+d(T^{m-1}x_0,T^mx_0)\}\to 0$$ as $n\to \infty$. This deduces that $(x_n)$ is a Cauchy sequence and hence convergent in $X$. Let us consider $$\displaystyle \lim_{n\to \infty} x_n=z.$$
Now, we show that $z$ is fixed point of $T$. In order to show this, we have
\begin{eqnarray*}
d(z,Tz)& \leq & d(z,T^{n+1}x_0)+d(T^{n+1}x_0, Tz)\\
& < & d(z,T^{n+1}x_0) +  \frac{1}{2}\{d(T^nx_0,T^{n+1}x_0)+d(z,Tz)\}\\
\Rightarrow \frac{1}{2}d(z,Tz) & < &d(z,T^{n+1}x_0)+\frac{1}{2}d(T^nx_0,T^{n+1}x_0)\to 0 \mbox { as n}\to\infty.
\end{eqnarray*}
This implies that $z=Tz$, i.e., $z$ is a fixed point of $T$. The uniqueness of the fixed point follows from Theorem \ref{thm1}.
\end{proof} 

\vskip.5cm\noindent{\bf Acknowledgements}\\
 The first named author would like to express his sincere gratitude to CSIR, New Delhi, India for their financial supports.


\begin{thebibliography}{1}

\bibitem{banach}
S.~Banach.
\newblock Sur les operations dans les ensembles abstraits et leur applications
  aux equations integrales.
\newblock {\em Fund. Math.}, 3:133--181, 1920.

\bibitem{chen}
H.~Y. Chen and C.~C. Yeh.
\newblock A note on fixed point in compact metric spaces.
\newblock {\em Indian J. Pure Appl. Math.}, 11(3):297--298, 1980.

\bibitem{edw}
R.~E. Edwards.
\newblock {\em Functional {A}nalysis: {T}heory and {A}pplications}.
\newblock Holt, Rinehart and Winston, 1965.

\bibitem{fish}
B.~Fisher.
\newblock A fixed point theorem for compact metric spaces.
\newblock {\em Publ. Inst. Math.}, 25:193--194, 1978.

\bibitem{gorn}
J.~G\'{o}rnicki.
\newblock Fixed point theorems for {K}annan type mappings.
\newblock {\em J. Fixed Point Theory Appl.}, 2017.

\bibitem{kan}
R.~Kannan.
\newblock Some results on fixed points.
\newblock {\em Bull. Calcutta Math. Soc.}, 60:71--76, 1968.

\bibitem{khan}
M.~S. Khan.
\newblock On fixed point theorems.
\newblock {\em Math. Japonica}, 23:201--204, 1978.

\bibitem{subra}
V.~Subrahmanyam.
\newblock Completeness and fixed points.
\newblock {\em Monatsh. Math.}, 80:325--330, 1975.

\end{thebibliography}
\bibliographystyle{plain}

\end{document}